\tikzstyle arrowstyle=[scale=3]
\tikzstyle directed=[postaction={decorate,decoration={markings,
    mark=at position .65 with {\arrow[arrowstyle]{stealth}}}}]
\tikzstyle reverse directed=[postaction={decorate,decoration={markings,
    mark=at position .65 with {\arrowreversed[arrowstyle]{stealth};}}}]
\DeclareRobustCommand{\ubar}[1]{\underaccent{\bar}{#1}}
 \newtheorem{thm}{Theorem}
\newtheorem{cor}[thm]{Corollary}
\newtheorem*{remark*}{Remark}
\newtheorem{lemma}[thm]{Lemma}
\newtheorem{proposition}[thm]{Proposition}
 \newcommand{\N}{\mathbb{N}}
 \newcommand{\Z}{\mathbb{Z}}
 \def\Fl{F_{l}}
  \def\Ml{M_{l}}  
  \def\Bl{B_{l}}
  \def\A{\mathcal{A}}
  \def\r{\vec{r}}
\def\Ln{\mathcal{L}(n)}
\begin{document}

\title[Topological entropy of the Bunimovich stadium billiard]{An upper bound on topological entropy of the Bunimovich stadium billiard map}

\author{Jernej \v Cin\v c}
\author{Serge Troubetzkoy}

\date{\today}

\subjclass[2020]{37C83, 37B40}
\keywords{Bunimovich stadium billiard, topological entropy.}

\address[J.\ \v{C}in\v{c}]{University of Maribor, Maribor, Koro\v ska 160, Slovenia -- and -- National Supercomputing Centre IT4Innovations IRAFM, University of Ostrava
	30. dubna 22, 70103 Ostrava, Czech Republic}
\email{jernej.cinc@um.si}

\address[S.\ Troubetzkoy]{Aix Marseille Univ, CNRS, I2M, Marseille, France \newline
postal address: I2M, Luminy, Case 907, F-13288 Marseille Cedex 9, France}
\email{serge.troubetzkoy@univ-amu.fr}

\begin{abstract}
We show that  the topological entropy of  the billiard map in a Bunimovich stadium is at most $\log(3.49066)$. 
\end{abstract}
\maketitle
\section{Introduction}
The Bunimovich stadium is  a planar domain whose boundary consists of two
semicircles joined by parallel segments as in Figure \ref{f1}. In this article we study the billiard in a Bunimovich stadium, this is the free motion of a point particle in the interior of the stadium with elastic collisions when the particle reaches the boundary.
Billiards in stadia were first studied by Bunimovich in \cite{B1,B2} where he showed that the billiard has hyperbolic behavior and showed the 
ergodicity, K-mixing and Bernoulli property
of the billiard map and flow with respect to the natural invariant measure  (see also \cite{CM},\cite{CT}).

In this article we will study the topological entropy of the billiard map in a Bunimovich stadium.
The topological entropy of a topological dynamical system is a real nonnegative number that is a measure of the complexity of the system. Roughly, it measures the exponential growth rate of the number of distinguishable orbits as time advances. We will discuss its exact definition in our setting in the next section.

The study of topological entropy of billiards was initiated in \cite{C1}. In this article it was claimed with a one sentence proof that the topological entropy of the billiard map of stadia is at most $\log(4)$.  A detailed proof using this strategy
was given later by B\"aker and Chernov, but they were able to show only a weaker estimate, that the topological entropy is at most $\log(6)$ \cite{BC}. Our main result will be a better upper bound on the topological entropy.

Recently, Misiurewicz and Zhang  \cite{MZ} have shown that as the side length tends to infinity the topological entropy of stadia is at least $\log(1 + \sqrt{2})$ by studying the map restricted to a subspace of the phase space which is compact and invariant under the billiard map. 
Another lower bound of the topological entropy can be derived from the variational principle\footnote{The variational principle holds for the Pesin-Pitskel' definition of entropy \cite{PP}, see Section~\ref{sec5} for applicability to our situation.} and  the results of Chernov on the asymptotics of the  metric entropy when the stadium degenerates to a circle, an infinite stadium, a segment, a point, or the plane in certain controlled ways \cite{Ch97}.

Topological entropy of hyperbolic billiards has also been studied in several other articles \cite{BD},\cite{BFK},\cite{C2},\cite{S}. 

\section{Definitions and statement of the results}\label{sec2}
We consider the Bunimovich stadium billiard table $\Bl$, with the radius of the
semicircles $1$, and the lengths of straight segments $l > 0$. The phase space of
this billiard map will be denoted by $\Ml$. It consists of points $s$ in the boundary of $\Bl$ and unit vectors  pointing into the interior of $\Bl$.
We represent the unit vector by measuring its angle $\theta$ with respect to the inner pointing normal vector, thus $$\Ml  := \{(s,\theta): s \in \partial \Bl, \theta \in (-\pi/2,\pi/2)\}.$$

The billiard map $\Fl$ is the first return map of the billiard flow $\Phi$ to the set $\Ml$.
Note that $\Fl$ is continuous, but $\Ml$ is not compact since we do not include vectors tangent to the boundary of $\Bl$. 

We remark that the map $\Fl$
does not extend to a continuous map of the closure of $\Ml$.
Thus all of the usual definitions of the topological entropy due to Adler, Konheim and McAndrew \cite{AKM},  Bowen \cite{B0,BO1} and Dinaburg \cite{D} can not be applied.
There are several  definitions of topological entropy which are possible.
The definition we take, is a very natural one: we take a natural coding of the billiard, and then consider the entropy of the shift map on the closure of the set of all possible codes. This definition gives an upper bound of another natural definition of topological entropy on non-compact spaces, the Pesin-Pitskel' \cite{PP} topological entropy (this approach is closely related to that of Bowen given in \cite{BO1}, however Bowen's definition is not equivalent to the of Pesin-Pitskel', see \cite{PP}[IV p.308]). In particular, similar results for Sinai billiards (also known as Lorentz gas) were recently obtained by Baladi and Demers \cite{BD}. For a more detailed discussion of possible definitions of topological entropy in our setting and their relationship to our definition see Section \ref{sec5}.

\begin{figure}[ht]
\centering
\begin{tikzpicture}[]

\draw [domain=90:270] plot ({cos(\x)}, {sin(\x)});

\draw [domain=270:450] plot ({5+cos(\x)}, {+sin(\x)});

\draw[] (0,1)--(5,1);
 \draw[] (0,-1)--(5,-1); 
 
 \draw[dotted] (-0.7072,-0.7072) rectangle (5.7071,0.7071); 
 
 \node at (0,1.1){\tiny \color{blue}$b$};
 \node at (0,-1.1){\tiny \color{pink} $p$};
  \node at (5,-1.1){\tiny \color{green} $g$};
 \node at (5,1.1){\tiny \color{red} $r$};

\node at (2.5,-1.2){\tiny $B$};
\node at (2.5,1.2){\tiny $T$};
\node at (-1.2,0){\tiny $L$};
\node at (6.2,0){\tiny $R$};
   
\end{tikzpicture}
\caption{Labeling the sides of the stadium and a period 4 orbit.}\label{f1}
\end{figure}
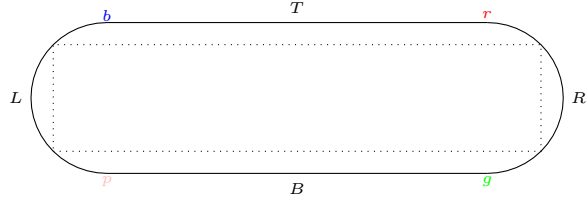

We now give a precise definition of the topological entropy we consider. 
We label the four smooth components of the boundary by the alphabet $\{L,T,R,B\}$, the meeting points of the components have double labels (see Figure \ref{f1}).  Slightly abusing notation we will say that $s\in Y$ where $Y\in \{L,T,R,B\}$ and mean that $s$ is a point in $\partial B_l$ with the label $Y$. It is easy to see that the corresponding partition is not a generator, for example the  period $4$ orbit with code $LLRR$ shown in Figure~\ref{f3} has the same code traced forward and backwards.

 We consider two copies of $L$, denoted by $\bar L$ and $\ubar L$, (similarly $\bar R$ and $\ubar R$ for $R$) and
let $\bar c : \Ml \to \mathcal{A} := \{\bar L ,\ubar L, T, B,\bar  R, \ubar R\}$ be the (multi-valued) {\it coding map} defined by 
$\bar c(s,\theta) = 
s$ if $s \in \{T,B\}$, 
$\bar c(s,\theta) = \bar s$
if $\theta \ge 0$ and $\bar c(s,\theta) =\ubar s$ if $\theta \le 0$ for $s \in \{L,R\}$.
We consider the cover of the phase space into $6$ elements given by this coding.
The interiors of each element of the cover are disjoint, thus
with the traditional misuse of terminology we will call this cover a {\em partition}.

We {\it code} the orbit of a point by the sequence of partition elements it hits, i.e., 
$$c(s,\theta) := (\omega_k)_{k \in \Z} \text{ where } \omega_k = \bar c(\Fl^k (s,\theta)).$$
For $i \le j$ let
 $$\tilde \Ml^{i,j} := \{(s,\theta) \in \Ml: \Fl^n(s,\theta)  \text{ is in the interior of a partition element } \forall i \le n \le j\}.$$
Notice that since $\bar{c}$ is multi-valued, the map $c$ is multi-valued in particular on $\partial \Ml^{i,j}$. However, for any point in the set
$ \tilde  \Ml^{i,j}$  the letter $\omega_k$ is unique for $i \le k \le j$,
and thus for any point in the set
 $$\tilde \Ml := \cap_{i \le j} \tilde  \Ml^{i,j}$$
 the infinite coding is unique.

Let $\tilde{\Sigma}$ be the {\em set of  bi-infinite} codes of points from $\tilde \Ml$, and let $\overline{\Sigma}$
be the closure of $\tilde{\Sigma}$ in the product topology, and let $\Ln$ be the set of words of length $n$ appearing in $\tilde{\Sigma}$ (and thus in $\overline{\Sigma}$ as well).
We let $p(n)$ denote the {\em complexity of $\tilde{\Sigma}$} ; i.e.,
$$p(n) := \#\{(\omega_0,\dots,\omega_{n-1}) \in \Ln\}
.$$

The quantity $ \log  p(n)$ is sub-additive, thus the growth rate 
$$\lim_{n \to \infty} \frac{\log p(n)}n$$
is well defined and is called the {\em topological entropy of the shift map restricted to the set $\overline{\Sigma}$}.

The 6 element partition is a {\em generating partition} in the sense that for each $\omega \in \overline{\Sigma} \setminus \{(TB)^\infty\}$ there is a unique $(s,\theta) \in \Ml$
 whose orbit has code $\omega$ (see \cite{BC} and the Appendix for a justification of this claim)
 thus it is natural to call this quantity the {\em topological entropy of the billiard map $\Fl$},
i.e., $$h_{top}(\Fl) := \lim_{n \to \infty} \frac{\log p(n)}n.$$

In this definition of the topological entropy we first miss a set by restricting to the interiors of partition elements, and then we add some points by taking the closure of $\tilde{\Sigma}$.
The sequences in $\overline{\Sigma} \setminus \tilde{\Sigma}$ are all the codes of points which hit boundaries of partition elements obtained by using one sided continuity extension in the spatial coordinate. 
Although the entropy of $\overline{\Sigma}$ equals the entropy of $\tilde{\Sigma}$, we do not know anything about the Pesin-Pitsel' entropy of the invariant set $\Ml \setminus \tilde{\Ml}$ since the open (clopen) covers of $\overline{\Sigma}$ do not necessarily arise from an open cover of $\Ml$.
In particular we do not know if this entropy is smaller than 
the estimate from Theorem~\ref{t:1}. 

Our work was originally inspired by \cite{MZ} where it was shown that
$$\lim_{l \to \infty} h_{top}(\Fl) \ge \log ( 1 + \sqrt{2}) > \log (2.4142).$$
In fact in \cite{MZ} the authors identify a certain compact subset of the phase space, such that if we restrict $\Fl$ to this set then we get equality in the above limit.

Another inspiration is \cite{BC}; the above mentioned fact about the six element partition being a generating partitions immediately implies  
$$ h_{top}(\Fl) \le \log (6) .$$
In the current paper we improve the upper bound on $h_{top}(\Fl)$.
Let $a := \frac{2 W(\frac{1}{e})}{1 + W(\frac{1}{e})}$ where $W(\frac{1}{e})$ is the unique solution to the equation $1 = w e^{w+1}$,
see \cite{CGHJK} and the beginning of the proof of Lemma~\ref{l7} for more information on the Lambert $W$ function.

The main result of our article is the following theorem

\begin{thm}\label{t:1} For any $l > 0$ we have
$\displaystyle  h_{top}(\Fl) < \log \left ( 2 \left (\frac{2}{a} - 1 \right )^a \right ) < \log (3.49066)$.
\end{thm}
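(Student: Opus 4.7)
The goal is to refine the Bäker--Chernov count $p(n) \le 6^n$ by using geometric constraints specific to the Bunimovich stadium. I would first catalogue the local forbidden patterns in admissible codes: (i) $TT$ and $BB$ cannot occur, since $T$ and $B$ are parallel; (ii) the two caps cannot be directly adjacent, so each ``cap visit'' (a maximal run of symbols from $\{\bar L, \ubar L\}$ or from $\{\bar R, \ubar R\}$) is flanked by symbols from $\{T,B\}$; and (iii) the angle of incidence $|\theta|$ is preserved within a cap visit, so the pattern of bar labels is essentially determined by the entry angle, which in turn is constrained by the length of the visit.

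The next step is a block decomposition. Write any admissible word of length $n$ as an alternation of $k$ cap-visit blocks of lengths $m_1, \dots, m_k$ (with $\sum m_j + k \le n$) separated by single $T$ or $B$ symbols. The geometric analysis above should yield a linear bound $c\cdot m$ on the number of admissible codes for a cap-visit block of length $m$ (corresponding to the finitely many angle sub-intervals producing $m$ consecutive cap bounces, together with a choice of cap and bar label). Combined with a binary $T$/$B$ choice at each of the $k$ interstitial straight symbols, this gives
\[
p(n) \;\le\; \mathrm{poly}(n) \cdot \max_{k,\, m_1 + \cdots + m_k \le n}\; 2^{k}\prod_{j=1}^{k} (c\,m_j).
\]
Concavity of $\log$ places the inner maximum at $m_1 = \cdots = m_k$; after collecting the pieces one obtains a bound of the form $\bigl(2\,(2/a - 1)^a\bigr)^n$ up to sub-exponential corrections, with $a = k/n$ the density of blocks.

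It remains to optimize $g(a) := \log 2 + a\log(2/a - 1)$ over $a$. Setting $g'(a) = 0$ yields $\log\bigl((2-a)/a\bigr) = 2/(2-a)$; substituting $u = (2-a)/a$ reduces this to $u\log u = u + 1$, and setting $w := 1/u$ converts it into $w e^{w+1} = 1$, precisely the defining equation of $w = W(1/e)$. Hence the optimal density is $a = 2W(1/e)/(1+W(1/e))$, giving $h_{top}(F_l) < \log\bigl(2\,(2/a-1)^a\bigr)$; the final numerical bound $\log(3.4908)$ is a straightforward computation from $W(1/e) \approx 0.278$.

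The main obstacle is the per-block count of step two: showing that a cap-visit of length $m$ admits only $O(m)$ distinct admissible codes. This requires a careful geometric/dynamical argument controlling the sub-intervals of entry angles in $(0,\pi/2)$ that realise exactly $m$ consecutive cap bounces, and showing that only a linear-in-$m$ family of bar-label patterns is geometrically consistent with each such interval. Obtaining the sharp constant that leads to $(2/a - 1)^a$ in the final formula, rather than a cruder $c^a$, demands this per-block count to be essentially tight; the refinement that improves the bound from $\log 6$ to $\log(3.4908)$ is encoded precisely in the sharpness of that linear factor.
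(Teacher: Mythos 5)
There is a genuine gap, and it sits exactly where you flag it, but it is deeper than a missing per-block estimate. Your plan counts admissible \emph{words} directly by a block decomposition. The paper does not do this, and for a good reason: every admissible word is realized by a whole continuum of orbits, so geometric rigidity gives you no handle on the number of words per block structure. The paper's essential first step (Theorem~\ref{t2}) is to apply Cassaigne's second-difference formula, $s(k+1)-s(k)=\sum_{v\in\BLn}\bigl(m_b(v)-m_\ell(v)-m_r(v)+1\bigr)$, which reduces the growth of $p(n)$ to counting \emph{bispecial} words, and then to show (Proposition~\ref{p3}, via an unfolding plus a defocusing wave-front argument) that each bispecial word corresponds to at most one saddle connection per ordered pair of corners. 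Saddle connections \emph{are} rigid objects, and it is only for them that the linear factor you want (the paper's $\prod_i(2r_i-1)$, counting the signed number of flat bounces between consecutive arc visits in the unfolding) can be justified. Without this reduction, the honest direct word count (cap runs are constant symbols, flat runs alternate $T$ and $B$) gives roughly $\sum_k\binom{n-1}{2k-1}8^k\sim(1+2\sqrt2)^n\approx 3.83^n$, which is worse than the claimed bound.

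Two further points in your combinatorics do not survive scrutiny. First, arc visits are not ``separated by single $T$ or $B$ symbols'': the particle may bounce $TBTB\cdots$ arbitrarily many times between caps, so the decomposition must allow flat runs of every length (the paper's signed compositions $(n_1,m_1,\dots,n_k,m_k)$ with $n_i\in\Z$ do exactly this). Second, and more seriously, you replace the sum over compositions $m_1+\cdots+m_k\le n$ by $\mathrm{poly}(n)$ times its maximum; there are exponentially many such compositions, so this step is invalid. The paper keeps the count $\binom{j}{k}$ of compositions, bounds it by the central binomial coefficient $\binom{j}{\lfloor j/2\rfloor}\le 2^j\sqrt{2/(\pi j)}$ (Gautschi's inequality), and it is precisely this $2^j$ --- not a ``binary $T/B$ choice per block'' --- that produces the leading $2$ in $2(2/a-1)^a$. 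Indeed your own formula $2^k\prod_j(cm_j)$ with $k=an$ would contribute $a\log 2$, not the constant $\log 2$ appearing in your $g(a)$, so your final optimization does not follow from your decomposition. Your Lambert-$W$ endgame is correct and coincides with the paper's Lemma~\ref{l7} (the critical point of $a\log(2/a-1)$ is unaffected by an additive constant), but the exponential rate it optimizes has not been derived.
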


We prove Theorem~\ref{t:1} by studying possible word complexity of the $6$ elements language associated to the Bunimovich billiard. In Section~\ref{s:3} we use Cassaigne's formula from \cite{C} and prove that $h_{top}(\Fl)$ is bounded from above by the limit of logarithmic growth rate of the number of distinct saddle connections of increasing lengths. Cassaigne's formula is very useful in studying low complexity systems, for example polygonal billiards \cite{CHT}. To the best of our knowledge this is the first application of this formula to positive entropy systems.
In Section~\ref{s:4} we give upper bounds for the number of different possible saddle connections using analytical tools,  which yields our estimate for $h_{top}(F_l)$. 

\section{Saddle connections}\label{s:3}

We consider the 6 element partition $\mathcal{A}$ defined in the previous section.
We will use the word {\it corner } to refer to the four points where the semi-circles meet the line segments as well as the two centers of the semi-circles.  More formally, in the case of the centers of the semi-circles, by starting at a corner we mean that we start perpendicularly to a semi-circle and thus the flow passes through the corner when leaving the half-disk defined by the semi-circle. Recall that the four smooth components of the boundary of $B_{l}$ are denoted by the alphabet $\{L,T,R,B\}$, see Figure~\ref{f1}.
The corners separate partition elements, this is clear for the four points, while for the centers of the semicircles we remark 
that the forward and backward orbit of any point $\bar L \cap \ubar L = \{(s,\theta): s \in L, \theta =0 \}$ passes through the center of the left semi-circle (a similar statement holds  for  points in $\bar R \cap \ubar R$). 

In analogy to polygonal billiards a {\it saddle connection} is an orbit segment which connects two corners of $\Bl$ (possibly the same)  and does not visit any corner in between. To avoid technical complications, we do not consider the diameters of the semi-circles as saddle connections. The {\em length of a saddle connection}
is the number of links in this trajectory.  Except for saddle connections of length one we will represent a saddle connection by the first point of collision after leaving the starting corner, thus the code of the orbit segment of length $n-1$ codes the saddle connection. Analogously, the empty word codes the saddle connections with length one.
 Let $N(n)$ denote the \emph{number of distinct saddle connections of length at most $n$}
 and $\mathcal{N}(n)$ denote the \emph{number of distinct saddle connections of length exactly $n$}.
Our main result is based on the following result.
\begin{proposition}\label{t2}
$
p(n)  \le 30 \sum_{j=0}^{n-1} N(j)
$ for all $n \ge 1$.
\end{proposition}

Thus $$h_{top}(\Fl)   = \lim_{n \to \infty}  \frac{\log p(n)}{n} 
\le \lim_{n \to \infty}  \frac{ \log  \big (30   \sum_{j=0}^{n-1} N(j) \big )}{n}  =
 \lim_{n \to \infty}  \frac{ \log  \big (\sum_{j=0}^{n-1} N(j) \big )}{n},$$ which yields

\begin{cor}\label{cor:1}
$$h_{top}(\Fl) \le \lim_{n \to \infty}  \frac{ \log  \big (\sum_{j=0}^{n-1} N(j) \big )}{n} .$$
\end{cor}

To prove the proposition we need some techniques that were developed by Cassaigne in \cite{C} and applied to polygonal billiards in \cite{CHT}.
Remember that $\Ln$ is the {\em set of blocks of length $n$} in the subshift $\overline{\Sigma}$ (so $p(n) = \# \Ln$). For $n \ge 1$, we define $s(n) := p(n+1) - p(n)$. For $u \in \Ln$ let 
\begin{eqnarray*}
m_\ell(u) & := & \# \{a \in \A: au \in \mathcal{L}(n+1)\},\\
m_r(u) & := & \# \{b \in \A: ub \in \mathcal{L}(n+1)\},\\
m_b(u) & := & \# \{(a,b) \in \A^2: aub \in \mathcal{L}(n+2)\}.
\end{eqnarray*}
We remark that all three of these quantities are larger than or equal to one. A word $u \in \Ln$  is called {\it left special} if $m_\ell(u) > 1$, {\it right special }if $m_r(u) > 1$ and {\it bispecial} if it is left and right special. Let $$\mathcal{BL}(n) := \{u \in \Ln: u \text{ is bispecial}\}.$$

In a more general setting in  \cite{C} (see \cite{CHT} for an English version) it was shown that  for all $k \ge 1$ we have
$$s(k+1) - s(k) = \sum_{v \in \mathcal{BL}(k)} \big ( m_b(v) - m_\ell(v) - m_r(v) + 1 \big ).$$

Consider the set of {\em strongly bispecial words} $$\mathcal{BL}_s(n) := \{u \in \Ln: u \text{ is bispecial and }  m_b(v) - m_\ell(v) - m_r(v) + 1  >0\}.$$

Clearly for all $k \ge 1$ we have
$$s(k+1) - s(k) = \sum_{v \in \mathcal{BL}_s(k)} \big ( m_b(v) - m_\ell(v) - m_r(v) + 1 \big ).$$

\begin{proof}[Proof of Proposition \ref{t2}]
Note that 
$$\big ( m_b(v) - m_\ell(v) - m_r(v) + 1 \big ) \le \max_{0 \le x,y \le 6} (xy - x - y +1 ) =25,$$ 
thus
summing over $1 \le k \le j-1$ yields
$$s(j) \le s(1)  +  25 \sum_{k=1}^{j-1} \# \mathcal{BL}_{s}(k).$$
In our case $\# \mathcal{BL}_{s}(1) = p(1) = 6$ and $p(2) = 30$
since among the 36 possible words, the 6 words that can not be realized are 
$TT,BB,\bar{L}\ubar{L},\ubar{L}\bar{L},\bar{R}\ubar{R},\ubar{R}\bar{R}$.
Thus 
$s(1) = p(2) - p(1) = 30 - 6 = 24 = 4 \#\mathcal{BL}_{s}(1)$, and thus we can estimate

$$s(j) \le   29 \sum_{k=1}^{j-1} \# \mathcal{BL}_{s}(k).$$
Remember that $s(j) = p(j+1) - p(j)$, thus summing over $ 1 \le j \le n-1$ yields

$$p(n) \le p(1)  +  29  \sum_{j=1}^{n-1} \sum_{k=1}^{j-1} \# \mathcal{BL}_{s}(k).$$
Again we can adjust the constant to absorb the term $p(1)$ yielding the  estimate
$$
p(n) \le  30 \sum_{j=1}^{n-1} \sum_{k=1}^{j-1} \# \mathcal{BL}_{s}(k).$$
To finish the proof of Proposition~\ref{t2} we need part  a) from  the following result.

\begin{proposition}\label{p3}
 a) For each $k \ge 1$ there is an injection 
 $\mathcal{C}: \mathcal{BL}_{s}(k-1) \to \mathcal{N}(k)$.\\
b) For any $v \in \mathcal{BL}(k)$ and any pair of corners $(s,s')$ there is at most one saddle connection with code $v$ starting at the corner $s$ and ending at the corner $s'$.

\end{proposition}
   Once a) is proven, this yields
$$  
p(n) \le  30 \sum_{j=1}^{n-1} \sum_{k=1}^{j-1} \# \mathcal{N}(k+1) \le 30 \sum_{j=0}^{n-1} N(j).
$$
which completes the proof of Proposition \ref{t2}. \end{proof}
 
\begin{figure}[ht]\label{ff}
\centering
\begin{tikzpicture}[scale=0.7, reflect at x/.style={xshift=#1*1cm,xscale=-1,xshift=-1*#1*1cm},
reflect at y/.style={yshift=#1*1cm,yscale=-1,yshift=-1*#1*1cm}]

\draw (0,0) -- (16,0);
\draw (0,5) -- (16,5);
\draw[blue, thick, dotted] (0,0) -- (0,5);
\draw[blue, thick, dotted] (16,0) -- (16,5);

\draw[dash dot] (0,2.5) -- (2,2.5);
\draw[dash dot] (8,2.5) -- (10,2.5);

\draw[pink, dotted, thick] (2,0) -- (2,5);
\draw[green, dotted, thick] (8,0) -- (8,5);
\draw[red, dotted, thick] (10,0) -- (10,5);

\node at (0,-0.2){\tiny \color{blue} $b$};
\node at (2,-0.2){\tiny \color{pink} $p$};
\node at (8,-0.2){\tiny \color{green} $g$};
\node at (10,-0.2){\tiny\color{red}  $r$};
\node at (16,-0.2){\tiny \color{blue}$b$};

\node at (1,-0.3){\tiny $L$};
\node at (5,-0.3){\tiny $B$};
\node at (9,-0.3){\tiny $R$};
\node at (13,-0.3){\tiny $T$};

\node at (-0.4,2.5){\tiny \color{red} $0$};
\node at (-0.4,0){\tiny \color{red} $ \text{-}\frac{\pi}{2}$};
\node at (-0.4,5){\tiny \color{red} $\frac{\pi}{2}$};

\draw [thick,red,dashed,reflect at y=2.5] (8,2.5) -- (10,0);
\draw [thick,red,dashed, reflect at y=2.5] (2,1) -- (8,2.5);
\draw [thick,red,dashed, reflect at y=2.5]  (0,5) -- (2,1);

\draw [thick,blue,dashed, reflect at y=2.5] (8,4) -- (10,0);
\draw [thick,blue,dashed, reflect at y=2.5] (2,2.5) -- (8,4);
\draw [thick,blue,dashed, reflect at y=2.5]  (0,5) -- (2,2.5);

\draw[thick,pink,dashed, reflect at y=2.5] (0,2.5) -- (2,0);
 \draw[thick,pink,dashed, reflect at y=2.5] (10,1) --(16,2.5);
 \draw[thick,pink,dashed,reflect at y=2.5] (8,5) -- (10,1);

\draw [thick,green,dashed, reflect at y=2.5] (0,4) -- (2,0);
 \draw[thick,green,dashed, reflect at y=2.5] (10,2.5) --(16,4);
 \draw[thick,green,dashed, reflect at y=2.5] (8,5) -- (10,2.5);
\end{tikzpicture}
\caption{The singularity sets of $F_l$ (dotted) and $F_l^{-1}$  (dashed). The singularity sets are monotone curves but for clarity they are drawn as linear segments.
The two segments which are both dotted and dashed are in both singularity sets.}\label{f2}
\end{figure}
 
 Before proving Proposition~\ref{p3} we need to introduce some more terminology.
 Let $\Gamma'$ be the {\em set of points from $M_l$ perpendicular to the semicircles}, i.e. $\Gamma'\subset M_l$ is the set of points for which $s\in L,R$ and  $\theta=0$.
 Let $\Gamma$ be the {\em union of $\Gamma'$ with the set of points where $F_l$ fails to be $C^2$}, and
 analogously $\Gamma^{-}$ is the {\em union of $\Gamma'$ with the set of points where $F_l^{-1}$ fails to be $C^2$}. We call $\Gamma$ and $\Gamma^-$ the {\em singularity sets for $F_l$ and $F_l^{-1}$} respectively.
 
 The singularity sets $\Gamma,\Gamma^-$ consist of a finite number of $C^1$-smooth compact curves in $\Ml$ (see Figure \ref{ff}), which are increasing, decreasing, horizontal, or vertical.
 Define the {\em singularity set for the map $F_l^n$} for $n\geq 1 $ by $\Gamma^n:=\bigcup^{n}_{i=1} F_l^{-i+1}(\Gamma)$ and the {\em singularity set of the map $F_l^{-n}$} for $n\geq 1$ by $\Gamma^{-n}:=\bigcup^{n}_{i=1} F_l^{i+1}(\Gamma^{-})$.

Remember that the set $\tilde \Ml$ defined in Section \ref{sec2} is the set of points in $\Ml$ having a well defined code, and 
 $\tilde \Ml$ is the set of points whose orbit does not hit a corner.

For $(s,\theta) \in \tilde \Ml$  let $c_k(s,\theta) 
:= (\bar c(\Fl^i (s,\theta))_{i=0}^{k-1}$ denote the {\em block of length $k$ containing 
 $c(s,\theta)$}.
 For $v \in \mathcal{L}(k)$ we define the set $$\omega(v) := \overline{\{(s,\theta) \in \tilde \Ml: v = c_k(s,\theta)\}}$$ and call $\omega(v)$ a {\it $k$-cell}.

\begin{proof}[Proof of Proposition \ref{p3}]
 a)  For $k=1$ we note that the empty word is bispecial and it corresponds to  22 saddle connections, one for each pair of distinct corners excluding diameters and sides, and thus a) holds for $k=1$.

Now suppose $k \ge 2$ and fix $v \in \mathcal{BL}_{s}(k-1)$. The proof of Lemma 2.5 in \cite{BC} shows  that the set $\omega(v)$ is a simply connected closed set whose boundary consists of a finite
collection of piecewise smooth curves with angles less than $\pi$ at vertices. These curves belong to the union of the singular sets of $F_l^{i}$  for $0 \le i \le k-2$.
For each $0 \le i \le k-2$ the map $F_l^i$ is continuous on $\omega(v)$.

Consider the  ``partition''
 $ \bigcup_{avb \in \mathcal{L}(k+1)} F_l (\omega(avb))$ of the set $\omega (v)$, this is a partition in the sense that the interiors of the partition elements are pairwise disjoint.
 This partition  is produced by cutting
 $\omega(v)$ by the   singular sets of $F_l^{k-1}$ and $F_l^{-1}$.

By assumption $v$ is bispecial, so 
the branches of the singular set of $F_l^{k-1}$ cut 
$\omega(v)$ into $m_r(v) \ge 2$ pieces and the  branches  of the singular set of $F_l^{-1}$  cut $\omega(v)$ into $m_\ell(v) \ge 2$
pieces.
 Suppose first that these singularities do not intersect,
 then the union of these singular sets cut $\omega(v)$
into $m_r(v) + m_\ell(v) -1$ pieces and thus the word $v$ is weakly bispecial, i.e., $v \in \mathcal{BL}(n) \setminus \mathcal{BL}_s(n)$ and as mentioned above does not contribute to the sum, see Figure~\ref{fig:singularities} left.

\begin{figure}[h!]
	\begin{tikzpicture}[scale=3]
	\draw[thick] (0,0)--(1,0)--(0.8,0.3)--(0.1,0.4)--(0,0);
	\draw[red,thick] (0.05,0.2)--(0.5,0.34);
	\draw[blue,thick] (0.7,0)--(0.7,0.31);
	\end{tikzpicture}
 	\hspace{1cm}
 \begin{tikzpicture}[scale=3]
	\draw[thick] (0,0)--(1,0)--(0.8,0.3)--(0.1,0.4)--(0,0);
	\draw[red,thick] (0.05,0.2)--(0.87,0.2);
	\draw[blue,thick] (0.7,0)--(0.7,0.31);
	\end{tikzpicture}
 \caption{ Examples of $\omega(v)$ which are weakly and strongly bispecial with $m_\ell(v)= m_r(v) = 2$.}\label{fig:singularities}
\end{figure}
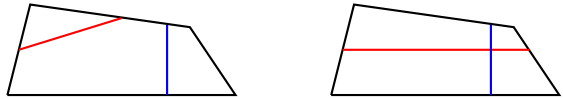

Consider a point $x$ of intersection of these two singular 
 sets. As mentioned above the angle formed is less than $\pi$, i.e.,
   the intersection must be transverse.   If this intersection is on the boundary of a cell, then the orbit of $x$ has at least three singular collisions, as so by definition $x$ does not 
 represent a saddle connection. So suppose $x$ is  in the interior of a cell  $\omega(v)$. We have the preimage of $x$ is a corner, and its forward image by $F_l^{k-1}$ is also a corner, and
 all intermediate collisions are non-singular, thus it 
corresponds to a saddle connection of length ${k-1}$.
 Thus  for $k \ge 2$ we have verified part a).

 We turn to the verification of part b).
Label the corners of $\Bl$ by the alphabet $\mathcal{A'} := \{1,2,3,4,5,6\}$.  The {\it code of a saddle connection} is the sequence from $\mathcal{A} \cup \mathcal{A}'$ a point hits along with the starting and ending corners; thus a saddle connection of length $n$ will have a code of
length $n+1$.
To finish the proof we need to show that there is a bijection between saddle connections and their codes.

We will give a brief 
sketch describing the bijection between the set of codes and possible trajectories of the billiard map. 
A smooth curve from the phase space $\Ml$ equipped with a continuous family of unit normal vectors is called a {\em wave front}.
Suppose by way of contradiction that two trajectories start at the same corner $s'$ and end at the corner $s''$ (possibly $s''=s'$) and have the same code. For concreteness
the starting points are $(s',\theta_1)$ and $(s',\theta_2)$.   We consider the 
set $G:= \{(s',\theta): \theta_1 \le \theta\le \theta_2\}$ and for each $t \ge 0$ the corresponding wave front $G_t = \Phi{_t}(G)$.
 A wave front is said to {\em focus} at time $t > 0$ when the projection of the wave front $G_t$ to the billiard table  intersects itself.
By way of contradiction we thus assumed the wave front $G_t$
refocuses at a corner; 
we will show that this is
in fact impossible.
We refer to Subsection 8.4. in \cite{CM} for a more complete description of what follows.

 Focusing occurs in Bunimovich billiards  after the wave front reflects from one of the two semi-circles.
Suppose that an infinitesimal wave front $G_t$ collides with $\partial \Ml$ at some point  in a  semi-circle; denote 
the  post-collisional curvature of the projection of $G_t$ to the billiard table by $\mathcal{G}^+_t$.
The curvature of a wave front does not change at the instance of a collision with a flat boundary.
Now suppose that the projection of $G_t$ to the billiard table experiences collisions with semi-circles at times $t$ and $t+\tau$,  with possibly some flat collisions in between.
Using (3.35) from \cite{CM} the wave front expands from a collision to another collision, if $|1+\tau \mathcal{G}^+_t|>1$. For this to hold, it is enough to check that  $\mathcal{G}^+_{t}<-2/\tau$ (see (8.2) from \cite{CM}).
A focusing wave front with curvature $\mathcal{G}^+_{t}<0$ passes through a focusing point and defocuses at the time $t^*=t-1/\mathcal{G}^+_{t}$ or in other words $\mathcal{G}^+_t=\frac{1}{t-t^*}$ (see Section 3.8 in \cite{CM}).  Thus $\mathcal{G}^+_t<-2/\tau$ is equivalent to $t^*<t+\tau/2$.  The last inequality indeed says that the wave front must defocus before it reaches the midpoint between the consecutive collisions.
By Theorem 8.9. from \cite{CM} it holds that the families of unstable cones remain unstable under the iteration of the map. This implies that all wave fronts are in the unstable cones which gives a contradiction. Therefore, we indeed have unique coding of trajectories of the billiard map.
\end{proof}

\section{Proof of Theorem \ref{t:1}}\label{s:4}

\begin{figure}[ht]
\centering
\begin{tikzpicture}[scale=0.5, rotate=90]

\draw [domain=90:270] plot ({cos(\x)}, {sin(\x)});

\draw [domain=270:450] plot ({5+cos(\x)}, {+sin(\x)});

\draw[] (0,5)--(5,5);
\draw[] (0,3)--(5,3);
\draw[] (0,1)--(5,1);
\draw[] (0,-1)--(5,-1); 
\draw[] (0,-3)--(5,-3);
\draw[] (0,-5)--(5,-5);

\node at (3.3,-5.2){\tiny $T$};
\node at (3.3,-3.2){\tiny $B$};
\node at (3.3,-1.2){\tiny $T$};
\node at (3.3,0.8){\tiny $B$};
\node at (3.3,2.8){\tiny $T$};
\node at (3.3,4.8){\tiny $B$};

\node at (6.3,-4){\tiny $R$};
\node at (6.3,-2){\tiny $R$};
\node at (6.3,0){\tiny $R$};
\node at (6.3,2){\tiny $R$};
\node at (6.3,4){\tiny $R$};

\node at (-1.3,-4){\tiny $L$};
\node at (-1.3,-2){\tiny $L$};
\node at (-1.3,0){\tiny $L$};
\node at (-1.3,2){\tiny $L$};
\node at (-1.3,4){\tiny $L$};

\draw[red, decoration={markings,mark=at position 1 with
    {\arrow[scale=1.7,>=stealth]{>}}},postaction={decorate}] (0,5) -- (6,0) -- (0,-5);
\draw[red,fill=red] (0,5) circle (.5ex);
\draw[red,fill=red] (0,-5) circle (.5ex);
\draw[red, dashed, decoration={markings,mark=at position 1 with
    {\arrow[scale=1.7,>=stealth]{>}}},postaction={decorate}] (0,4) -- (6,0) -- (0,-4);
\draw[red,fill=red] (0,4) circle (.5ex);
\draw[red,fill=red] (0,-4) circle (.5ex);
\draw[blue, decoration={markings,mark=at position 1 with
    {\arrow[scale=1.7,>=stealth]{>}}},postaction={decorate}] (5,5) -- (-0.5,-4.85) -- (5,4);
\draw[blue,fill=blue] (5,5) circle (.5ex);
\draw[blue,fill=blue] (5,4) circle (.5ex);

\draw [domain=90:270] plot ({cos(\x)},  {2+sin(\x)});
\draw [domain=270:450] plot ({5+cos(\x)}, {2+sin(\x)});

\draw [domain=90:270] plot ({cos(\x)},  {-2+sin(\x)});
\draw [domain=270:450] plot ({5+cos(\x)}, {-2+sin(\x)});

\draw [domain=90:270] plot ({cos(\x)},  {4+sin(\x)});
\draw [domain=270:450] plot ({5+cos(\x)}, {4+sin(\x)});

\draw [domain=90:270] plot ({cos(\x)},  {-4+sin(\x)});
\draw [domain=270:450] plot ({5+cos(\x)}, {-4+sin(\x)});

\node at (2.5, 6){\tiny $\dots$};
\node at (2.5, -6){\tiny $\dots$};
\end{tikzpicture}
\caption{Unfolding the stadium. Remember that centers of semi-circles are also corners.
There are eight saddle connections with signed composition $2,1,2$, two of them having 
 code $TB\bar RTB$ are drawn in red, there are two more saddle connections with this code, the other four  have code $TB\bar LTB$.
If we reverse the arrows we obtain the saddle connection with signed composition $-2,1,-2$.
In blue we show a saddle connection with 
code $ TBTB{\ubar L}BTBT$ 
and signed composition $4,1,-4$. If we reverse the arrows the blue saddle connection has the same code and signed composition.}\label{f3}
\end{figure}
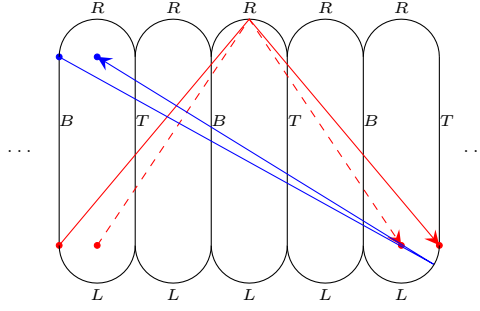

To count the number of saddle connections we unfold the stadium (see Figure \ref{f3}).
Consider an integer $j \ge 1$.  We say $(n_1, m_1, n_2, m_2, \dots n_k , m_k)$ is a {\it signed composition} of $j$
if $n_i \in \Z$, $m_i \in \N$ with $m_i \ge 1$ for all $i$ such that $\sum |n_i| + \sum m_i = j$.
Let $Q(j)$ denote the number of {\em signed compositions of $j$}. Recall that $N(n)$ denotes the number of distinct saddle connections of length at most $n$.

\begin{lemma}
For each $n$ and $l > 0$ we have
$$  N(n) \le 36 \sum_{j=0}^{n-1}  Q(j).$$
\end{lemma}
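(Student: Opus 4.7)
My plan is to exhibit, for each saddle connection $\tau$ of length exactly $\ell$, a triple $(s, s', C)$ consisting of the starting corner $s$, the ending corner $s'$, and a signed composition $C$ of $\ell - 1$, in such a way that the assignment $\tau \mapsto (s, s', C)$ is injective. Since there are $6 \cdot 6 = 36$ corner pairs and exactly $Q(\ell - 1)$ signed compositions of $\ell - 1$, this immediately yields $\#\{\text{saddle connections of length } \ell\} \le 36 \cdot Q(\ell - 1)$, and summing over $\ell$ from $1$ to $n$ gives $N(n) \le 36 \sum_{j=0}^{n-1} Q(j)$, as required.

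To define the signed composition of $\tau$, I would unfold the stadium across the flat boundaries $T$ and $B$, so that any maximal run of consecutive flat bounces becomes a straight-line arc crossing horizontal flat-boundary lines in an infinite vertical strip, while any maximal run of consecutive semicircle bounces stays within a single semicircular region (as in Figure~\ref{f3}). The $(\ell - 1)$-letter code $c(\tau)$ then decomposes into alternating runs of flat letters from $\{T, B\}$ and semicircle letters from $\{\bar L, \ubar L, \bar R, \ubar R\}$; to the $i$-th flat run I associate the signed integer $n_i$ with $|n_i|$ equal to the run length and $\mathrm{sign}(n_i)$ recording whether the unfolded trajectory is ascending or descending during the run, and to the $i$-th semicircle run the positive integer $m_i$ equal to its length. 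The resulting tuple is a signed composition of $\ell - 1$ in the sense of the lemma (with the convention $n_1 = 0$ if the code begins with a semicircle run).

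The heart of the proof is the injectivity of $\tau \mapsto (s, s', C)$. Suppose two distinct saddle connections $\tau, \tau'$ shared the triple $(s, s', C)$. In the unfolded picture, both would be broken straight lines from the lifted $s$ to the lifted $s'$ with $\sum m_i$ semicircle-reflection vertices in between, whose $i$-th straight segment crosses exactly $|n_i|$ flat-boundary lines in the direction $\mathrm{sign}(n_i)$. Taking the continuous family of initial directions from $s$ that interpolates between those of $\tau$ and $\tau'$ and tracking the associated wave front exactly as in the proof of Proposition~\ref{p3}, the hypothesis forces the wave front to refocus at the corner $s'$. But the wave-front machinery used there --- Theorem~8.9 of \cite{CM} on invariance of unstable cones, combined with the defocusing-before-midpoint estimate of Section~8.4 of \cite{CM} --- precludes refocusing at a corner, giving the required contradiction.

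The main obstacle is precisely this injectivity step. In contrast to Proposition~\ref{p3}, where the full six-letter code served as the distinguishing invariant, the signed composition retains only the run lengths and vertical-direction signs: it forgets which of $T, B$ begins a flat run and the detailed choice among $\{\bar L, \ubar L, \bar R, \ubar R\}$ inside a semicircle run. One therefore has to verify that this coarser invariant, together with $s$ and $s'$, still forces both trajectories into a common unstable cone, so that the focusing-refocusing dichotomy from \cite{CM} is applicable and rules out two distinct saddle connections sharing the same triple.
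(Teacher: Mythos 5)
Your proposal takes essentially the same route as the paper's brief proof: both associate a signed composition to each saddle connection via the unfolding (with $|n_i|$ counting flat bounces, $m_i$ counting semicircle bounces, and the sign of $n_i$ recording the direction of movement), and both invoke the wave-front/unstable-cone argument underlying Proposition~\ref{p3} to conclude that each triple of start corner, end corner, and signed composition is realized by at most one saddle connection, which gives the factor $36$. Your explicit observation that the signed composition is a strictly coarser invariant than the full code from $\mathcal{A}\cup\mathcal{A}'$, so that Proposition~\ref{p3} as literally stated does not apply and the unstable-cone argument must be rerun for the coarser invariant, identifies a subtlety that the paper's one-line citation ``As we showed in Proposition~\ref{p3}\dots'' glosses over; the fix you sketch --- that the defocusing-before-midpoint estimate and Theorem~8.9 of \cite{CM} are insensitive to the overbar/underbar refinement and to which of $T,B$ is hit first --- is exactly what is needed.
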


\begin{proof}
Fix a corner of $\Bl$ and consider the saddle connections of length at most $n$ starting at this corner.  
We consider the associated signed composition in the following way:  the non-negative integer  $|n_i|$ counts the consecutive hits in the flat sides of $\Bl$ 
and $m_i$ counts the consecutive hits in a semicircle.  The sign of $n_i$ tells us which way we are moving in the unfolding, left or right, when changing from one semicircle to the other.
In this way each saddle connection yields a signed composition.  

Fix $j\geq 1$ and a signed composition of $j$.
As we showed in  part b) from Proposition \ref{p3}, for each pair of corners there is at most one saddle connection with this signed composition. Thus, since there are 6 corners,
 there are at most 36 codes of saddle connections which correspond to a given signed composition.
\end{proof}

 Let $(n_1, m_1, n_2, m_2, \dots n_k , m_k)$ be a signed composition of $j$ with $2k$ terms. Denote by $Q(j,k)$ the {\em number of such possible compositions of $j\in \N$ with $2k$ terms}. 
 Let
 $$r_i := |n_i| + m_i, \text{ then } \r_k(j) := (r_1, \dots , r_k)$$ 
 is a {\em composition of $j$ with $k$ terms}.
In what follows we will first estimate $Q(j,k)$ and then $Q(j)$.

 Fix a composition $\r_k(j)$.  Each $n_i \in \{-r_i+1,  \dots, -1,0, 1, \dots r_i-1\}$ yields a different signed composition,  there are 
 $$f(\r_k(j)) := \prod_{i=1}^k (2 r_i- 1)$$  preimages of  $\r_k(j)$ in total, i.e., 
 $$Q(j,k) =\sum_{\ell \ge 1} \ell \times  \# \{\r_k(j): f(\r_k(j)) = \ell  \}.$$
 We start by estimating the number of terms in this sum, i.e., the largest possible value of $\ell$. If $s=q_1+\ldots +q_k$, then the arithmetic-geometric mean inequality
 $$\sqrt[k]{q_1\cdots q_k}\leq \frac{q_1+\ldots +q_k}{k}$$ 
yields $$q_1\cdots q_k\leq \left (\frac{s}{k} \right )^k.$$ 
Notice that  the equality is obtained if and only if all the $q_1 = q_2 = \dots = q_k$, and thus $q_i = s/k$.
Setting $q_i = 2r_i -1$ and $s = 2j-k$ yields
   $$ f(\r_k(j)) \le \left (\frac{2j}{k}-1 \right )^k$$
with equality if and only if $\displaystyle r_i = \frac{2j}{k} -1$.

Thus
\begin{align*}
Q(j,k) & \le \left (\frac{2j}{k}-1 \right )^k \sum_{\ell \ge 1}  \# \Big \{\r_k(j): f(\r_k(j))
 = \ell  \Big \} \\ &= \left (\frac{2j}{k}-1 \right )^k \times \# \Big \{\r_k(j) \Big \}\\
 &= \left ( \frac{2j}{k}-1 \right )^k  \binom{j}{k}.
\end{align*}
Fix $j \ge 1$ and let $g_j$ be the function
defined by
$$k \in \{1, \dots, j\} \mapsto \left (\frac{2j}{k}-1 \right )^k  \binom{j}{k}.$$

\begin{lemma}\label{l:comb}
The function $k \in \{1,\dots,j\} \mapsto \binom{j}{k}$ is increasing for $1 \le k \le \frac{j+1}{2}$ and decreasing for $\frac{j+1}{2} \le k \le j$.
\end{lemma}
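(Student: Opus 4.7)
The plan is to prove the lemma by examining the ratio of consecutive values of the binomial coefficient. A direct computation gives, for $k \in \{1, \dots, j-1\}$,
$$\frac{\binom{j}{k+1}}{\binom{j}{k}} = \frac{k!(j-k)!}{(k+1)!(j-k-1)!} = \frac{j-k}{k+1}.$$

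Next I would analyze the position of this ratio relative to $1$. The inequality $(j-k)/(k+1) \geq 1$ is equivalent to $2k \le j-1$, i.e.\ $k \le (j-1)/2$, and the reverse inequality $(j-k)/(k+1) \le 1$ is equivalent to $k \ge (j-1)/2$. Consequently $\binom{j}{k+1} \ge \binom{j}{k}$ for every integer $k$ with $k+1 \le (j+1)/2$, which chains up to show that $k \mapsto \binom{j}{k}$ is non-decreasing on the integers in $[1, (j+1)/2]$. Symmetrically, $\binom{j}{k+1} \le \binom{j}{k}$ for every integer $k$ with $k \ge (j-1)/2$, i.e.\ $k+1 \ge (j+1)/2$, yielding the non-increasing behavior on the integers in $[(j+1)/2, j]$. (If preferred, the decreasing half can also be deduced from the increasing half by invoking the symmetry $\binom{j}{k} = \binom{j}{j-k}$.)

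This is essentially a textbook fact about binomial coefficients and I do not expect any genuine obstacle. The only small bookkeeping point is that when $j$ is odd, $(j+1)/2$ is an integer lying in both ranges, and the ratio computation at $k = (j-1)/2$ gives exactly $1$, consistent with $\binom{j}{(j-1)/2} = \binom{j}{(j+1)/2}$; when $j$ is even, $(j+1)/2$ is a half-integer and the two ranges share no integer, so there is no overlap to reconcile. Either way, the monotonicity asserted in the lemma follows directly from the single ratio identity above.
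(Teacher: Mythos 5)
Your proposal is correct and follows essentially the same route as the paper: both arguments rest on the ratio of consecutive binomial coefficients (the paper writes it as $\binom{j}{k} = \binom{j}{k-1}\tfrac{j-k+1}{k}$, which is your identity with the index shifted) and both obtain the decreasing half from the symmetry $\binom{j}{k} = \binom{j}{j-k}$. No issues.
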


\begin{proof}
The inequalities  $\displaystyle 1 \le k-1<k \le \frac{j+1}{2}$ imply that $\displaystyle \frac{j-k+1}{k} \ge 1$ and thus
$$ \binom{j}{k} = \binom{j}{k-1}\frac{j-k+1}{k} \ge \binom{j}{k-1}$$
and thus the function is increasing for $1 \le k \le \frac{j+1}{2}$.

The decreasing statement holds since $\binom{j}{k} = \binom{j}{j-k}$.
\end{proof}

For each $1 \le j$ let $$h_j(x) = \left (\frac{2j}{x}-1  \right )^x = e^{x\ln(\frac{2j}{x}-1)}.$$ 
\begin{lemma}\label{l6}
For each $j \ge 2$ there exists a unique $x_j>1$
such that $h_j$ is increasing for $x\in [1,x_j]$ and decreasing for $x\in [x_j,j]$.
\end{lemma}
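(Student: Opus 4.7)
The plan is to take the logarithm. Since $h_j(x) > 0$ on $(0, 2j) \supset [1, j]$, the sign of $h_j'$ coincides with that of $\phi_j' := (\ln h_j)'$. Writing $\phi_j(x) = x\bigl[\ln(2j-x) - \ln x\bigr]$, a direct computation gives
\[
\phi_j'(x) \;=\; \ln\frac{2j-x}{x} \;-\; \frac{x}{2j-x} \;-\; 1.
\]
The goal is then to show that $\phi_j'$ is strictly monotone on $[1, j]$ and to locate its unique zero via the intermediate value theorem, the sign of $\phi_j'$ on either side of that zero giving the claimed piecewise monotonicity of $h_j$.

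For strict monotonicity I would differentiate once more:
\[
\phi_j''(x) \;=\; -\frac{1}{x} \;-\; \frac{1}{2j-x} \;-\; \frac{2j}{(2j-x)^2},
\]
which is a sum of manifestly negative terms on $(0, 2j)$. Hence $\phi_j'$ is strictly decreasing on $[1, j]$.

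It remains to examine the sign of $\phi_j'$ at the two endpoints. At the right endpoint one immediately gets $\phi_j'(j) = \ln 1 - 1 - 1 = -2 < 0$. At the left endpoint,
\[
\phi_j'(1) \;=\; \ln(2j-1) - \frac{1}{2j-1} - 1,
\]
and this quantity is strictly positive for $j \ge 3$ (at $j=3$ one has $\ln 5 - 1/5 - 1 > 0$, and differentiating with respect to $j$ shows this expression is increasing in $j$ since $\tfrac{d}{dj}\ln(2j-1) = \tfrac{2}{2j-1}$ dominates $\tfrac{d}{dj}\tfrac{1}{2j-1} = -\tfrac{2}{(2j-1)^2}$). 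Combining strict monotonicity of $\phi_j'$ with the opposite signs at the endpoints produces a unique $x_j \in (1, j)$ with $\phi_j'(x_j) = 0$; on $[1, x_j]$ we have $\phi_j' > 0$ so $h_j$ is strictly increasing, and on $[x_j, j]$ we have $\phi_j' < 0$ so $h_j$ is strictly decreasing.

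The only mild obstacle is the verification of the sign of $\phi_j'(1)$, which is what rules out a maximum at the boundary. A direct check shows that $\phi_j'(1) < 0$ at $j = 2$ (where $\ln 3 < 4/3$), so the case $j=2$ needs to be inspected by hand: $h_2$ is in fact monotone decreasing on $[1, 2]$, so one takes $x_j = 1$ with the degenerate interpretation (or applies the lemma only for $j \ge 3$, which is all that is needed for the asymptotic estimate in Theorem~\ref{t:1}). For all $j \ge 3$ the argument above is immediate.
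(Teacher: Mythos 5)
Your argument is correct and follows essentially the same route as the paper: pass to the logarithmic derivative, show that the relevant factor is strictly decreasing, and locate its unique zero by the intermediate value theorem using the signs at the endpoints $x=1$ and $x=j$. Your $\phi_j'(x) = \ln\frac{2j-x}{x} - \frac{x}{2j-x} - 1$ is literally the paper's $k_j(x) = \frac{-2j}{2j-x} + \ln\left(\frac{2j}{x}-1\right)$, since $\frac{-2j}{2j-x} = -1 - \frac{x}{2j-x}$, and your $\phi_j''$ agrees (after combining fractions) with the paper's $k_j'$ up to an immaterial typo in the paper's numerator ($-4j$ should be $-4j^2$).

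The one substantive point is your endpoint check at $x=1$, and there you have caught a genuine slip in the paper: the proof asserts $k_j(1) = \frac{-2j}{2j-1} + \ln(2j-1) > 0$ for all $j \ge 2$, but at $j=2$ this equals $\ln 3 - \frac{4}{3} < 0$, so $h_2$ is strictly decreasing on all of $[1,2]$ and no $x_2 > 1$ with the stated property exists. (Consistently, Lemma~\ref{l7} gives $x_j = aj$ with $a \approx 0.4355$, so $x_2 \approx 0.87$ falls outside $[1,2]$.) Either of your remedies is appropriate: treat $j=2$ by hand with the degenerate choice $x_2 = 1$, or state the lemma for $j \ge 3$ only. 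Neither affects the downstream estimates: for $j=2$ the maximum of $g_2$ is $g_2(1) = 6$, which is still below the bound $1.7454^2\binom{2}{1}$ used in Corollary~\ref{cor:estimate}, and the entropy bound is asymptotic in any case. Your auxiliary observation that $\phi_j'(1)$ is increasing in $j$ (so positivity at $j=3$ propagates to all $j \ge 3$) is also correct and slightly cleaner than checking the inequality for each $j$ separately.
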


\begin{proof}
Throughout the proof the functions under consideration are restricted to the domain $[1,j]$.
We begin by calculating the derivative of $h_j$,
$$h'_j(x) = h_j(x)  \left ( \frac{-2j}{ 2j-x } + \ln \left (\frac{2j}{x}-1 \right )\right ).$$
Let $\displaystyle k_j(x) := \frac{-2j}{ 2j-x } + \ln \left (\frac{2j}{x}-1 \right )$, then the signs of $h'_j$ and $k_j$ are the same since $h_j$ is positive. We study the sign of $k_j$ by taking its derivative: 
$$k'_j(x) =
-\frac{2j}{\left(2j-x\right)^2}-\frac{2j}{x\left(-x+2j\right)}
=
\frac{-4j^2}{x(2j-x)^2} < 0$$ 
for $x \in [1,j]$.

For $j \ge 2$ we have $k_j(1) =  \frac{-2j}{ 2j-1 } + \ln \left ({2j}-1 \right ) > 0$.
Furthermore, $k_j(j) = -2 < 0$;
thus there is a unique $x_j \in (1, j)$ which is the solution of the equation $k_j(x) = 0$ such that $\mathrm{sgn}(h'_j(x)) = \mathrm{sgn}(k_j(x)) > 0$ for $x \in (1,x_j)$ and 
$\mathrm{sgn}(h'_j(x)) = \mathrm{sgn}(k_j(x)) < 0$ for $x \in (x_j, j)$ and thus
 $h_j(x)$ is maximized when $x=x_j$. 
\end{proof}

To prove the next lemma we will use the Lambert $W$ function; see \cite{CGHJK} for an introduction to this notion. The Lambert $W$ function is a multivalued function which for a given complex number $z$ gives all the complex numbers $w$  which satisfy the equation $w e^w =z$. If $z$ is a positive real number then there is a single real solution $w$ of this equation which we denote $W(z)$.

\begin{lemma}\label{l7}
There exists a constant $a$ such that $x_j = a \cdot j$ for each $j \ge 2$.
\end{lemma}
\begin{proof}
The equation $k_j(x_j) = 0$ is equivalent to 
$
    \frac{2j - x}{x}  =  e^{1}e^{\left (\frac{x}{2j-x}\right )}.
$
   Substituting $w = \frac{x}{2j-x}$ yields  
$     1/w  =  e^1 e^w$
or equivalently
$\frac{1}{e} = w e^w.
$
Since $\frac{1}{e}$ is positive there is a single solution to this equation $w = W(\frac{1}{e})$, and thus
$x_j = \frac{2 W(\frac{1}{e})}{1 + W(\frac{1}{e})} \cdot j =: a \cdot j$.
\end{proof}

\begin{lemma}\label{l:max}
The constant $a$ verifies $a \in (0.43562,0.43563)$.
The maximum value of $h_j$ is at most
$\left ( \frac{2}{a} - 1 \right )^{aj} < 1.74533^j$.
\end{lemma}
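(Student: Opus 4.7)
The plan is to combine Lemmas~\ref{l6} and~\ref{l7}, which together identify the unique maximizer of $h_j$ on $[1,j]$ as $x_j = aj$. This localization is valid for all $j$ large enough that $aj \ge 1$, i.e., $j \ge 3$; the edge cases $j=1,2$ are dispatched by direct evaluation (noting that $h_1(1)=1$ and, since $k_2(1)=-\tfrac43+\ln 3 <0$ so $h_2$ is decreasing on $[1,2]$, one has $\max h_2 = h_2(1) = 3$), and both values are comfortably below $\bigl((2/a-1)^{a}\bigr)^{j}$, which is how I read the conclusion as an exponential-growth-rate bound.

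Once the maximizer is located, the core of the proof is a one-line substitution into $h_j(x) = \bigl(\tfrac{2j}{x}-1\bigr)^x$:
\[
\max_{x\in[1,j]} h_j(x) \;=\; h_j(aj) \;=\; \left(\frac{2}{a}-1\right)^{aj} \;=\; \left[\left(\frac{2}{a}-1\right)^{a}\right]^{j},
\]
so the per-step exponential growth rate of the maximum is exactly $\phi(a) := \bigl(\tfrac{2}{a}-1\bigr)^{a}$, and the substantive content of the lemma reduces to the numerical inequality $\phi(a) < 1.7454$.

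For this numerical verification, I would first exploit the critical-point equation $k_j(x_j)=0$ from Lemma~\ref{l6} which, after substituting $x_j = aj$, collapses to the cleaner identity $\ln(2/a-1) = 2/(2-a)$; multiplying by $a$ yields
\[
\phi(a) = e^{a \ln(2/a - 1)} = e^{2a/(2-a)}.
\]
I would then solve $w e^{w+1}=1$ by Newton iteration to obtain $w = W(1/e) \approx 0.278465$ with a rigorous error bound, propagate this to $a = 2w/(1+w) \approx 0.43556$, use strict monotonicity of $a \mapsto 2a/(2-a)$ on the enclosure $a \in (0.435, 0.436)$ from Lemma~\ref{l7} to convert a two-sided bound on $a$ into one on $2a/(2-a)$, and finally verify the resulting upper bound on $e^{2a/(2-a)}$ is below $1.7454$. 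The main obstacle is precisely this numerical step: since $\phi(a) \approx 1.7450$ is uncomfortably close to the claimed bound $1.7454$, one must retain several decimal digits of precision and ideally carry out the whole chain in interval arithmetic for a fully rigorous inequality.
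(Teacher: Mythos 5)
Your overall strategy coincides with the paper's: both locate the maximizer via Lemmas~\ref{l6} and~\ref{l7}, substitute $x_j=aj$ to get $h_j(x_j)=\bigl((2/a-1)^a\bigr)^j$, and reduce the lemma to the numerical inequality $(2/a-1)^a<1.7454$. Two of your refinements are genuine improvements: you read the conclusion correctly as a per-step growth bound (the literal statement omits the $j$-th power), and you handle $j=2$, where $k_2(1)=-\tfrac43+\ln 3<0$ so the critical point $2a<1$ lies outside $[1,2]$ --- a case the paper's Lemma~\ref{l6} actually gets wrong when it asserts $k_j(1)>0$ for all $j\ge 2$. Your identity $(2/a-1)^a=e^{2a/(2-a)}$, obtained from the critical-point equation, is also illuminating: it shows that $t\mapsto(2/t-1)^t$ has vanishing derivative exactly at $t=a$ and is \emph{maximized} there, so the paper's claim that $a_1<a$ implies $(2/a-1)^a<(2/a_1-1)^{a_1}$ is backwards; the paper's evaluation at $a_1=0.435$ in fact gives a lower bound $\approx 1.74530$ on the true value $\approx 1.74531$, and its proof only survives because $0.435$ is so close to the flat maximum.

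The genuine gap in your plan is quantitative: the enclosure $a\in(0.435,0.436)$ from Lemma~\ref{l7} is \emph{not} sufficient for your route. Since $a\mapsto 2a/(2-a)$ is increasing, the upper endpoint gives
\begin{equation*}
e^{2(0.436)/(2-0.436)}=e^{0.55754\ldots}\approx 1.7464>1.7454,
\end{equation*}
so the inequality fails at the last step. Because $\ln(1.7454)\approx 0.556984$ while $2a/(2-a)\approx 0.556930$, you need the enclosure $a<0.43565$ (roughly five correct digits), which your Newton iteration on $we^{w+1}=1$ can certainly deliver with a rigorous error bound, but which must actually be carried out; the coarse interval you cite from Lemma~\ref{l7} cannot be used as stated. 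With that tightening (and noting your quoted values $a\approx 0.43556$ and $\phi(a)\approx 1.7450$ should be $\approx 0.43562$ and $\approx 1.74531$), your argument closes and is, if anything, more rigorous than the paper's.
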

\begin{proof}
Notice that
$k_j(0.43562 j) \approx  0.00001 > 0$
and
$k_j(0.43563 j) \approx -0.00002< 0$.
Remembering from Lemma~\ref{l6} that $k_j$ is decreasing yields
 $0.43562 < a < 0.43563$. 
The maximum value of $h_j$ is
$\displaystyle h_j(x_j) = h_j(a \cdot j) = 
\left  ( \frac{2}{a} - 1 \right )^{aj}$.
If $0< a_1 < a$
 then
$\displaystyle  
 \frac{2}{a} - 1  <  
 \frac{2}{a_1} - 1.$
If furthermore $a < \min(a_2,1)$ then $\frac{2}{a} - 1 > 1$ and thus
$\displaystyle  
\left  ( \frac{2}{a} - 1 \right )^a  <  
\left  ( \frac{2}{a_1} - 1 \right )^{a_2}.$ 
Combining this with our previous estimate yields
 $\left  ( \frac{2}{a} - 1 \right )^a < \left  ( \frac{2}{0.43562} - 1 \right )^{0.43563} < 1.74533$.
\end{proof}

Combining Lemma~\ref{l:comb} and Lemma~\ref{l:max}  yields:
\begin{cor}\label{cor:estimate}
For  $j \ge 2$  the
maximum value of function $g_j$ is
bounded from above by
  $\displaystyle
  \left ( \frac{2}{a} - 1 \right )^a \binom{j}{\lfloor \frac{j}{2} \rfloor} < 1.74533^j \binom{j}{\lfloor \frac{j}{2} \rfloor}.$
\end{cor}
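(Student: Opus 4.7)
The plan is to decompose $g_j(k) = h_j(k) \binom{j}{k}$ and bound each factor separately by its maximum over $k \in \{1, \ldots, j\}$. First I would appeal to Lemma~\ref{l:comb} to conclude $\binom{j}{k} \le \binom{j}{\lfloor j/2 \rfloor}$ for every admissible $k$. Then I would combine Lemmas~\ref{l6} and \ref{l7}, which together establish that the continuous extension $h_j$ on $[1,j]$ is unimodal with unique maximum at $x_j = a\cdot j$, so that for every integer $k \in \{1, \ldots, j\}$ one has
$$h_j(k) \le h_j(a\cdot j) = \left(\frac{2}{a}-1\right)^{aj} = \left(\left(\frac{2}{a}-1\right)^a\right)^{j}.$$

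Multiplying these two pointwise bounds would yield the first claimed inequality
$$g_j(k) \le \left(\left(\frac{2}{a}-1\right)^a\right)^{j} \binom{j}{\lfloor j/2 \rfloor}.$$
For the numerical bound, I would invoke the estimate $(2/a-1)^a < 1.7454$ already proved in Lemma~\ref{l:max} and raise both sides to the $j$-th power to get $1.7454^{j}$.

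I do not anticipate a real obstacle: the corollary is a direct assembly of Lemmas~\ref{l:comb}, \ref{l6}, \ref{l7}, and \ref{l:max}. The only subtlety worth flagging is that the two factors of $g_j$ attain their maxima at different locations ($a\cdot j$ for $h_j$, $\lfloor j/2 \rfloor$ for the binomial coefficient), so the product bound is not tight; this is harmless for proving the stated inequality, but it does indicate that the same technique could in principle yield a slightly sharper constant if one jointly optimized the product rather than bounding the two factors independently.
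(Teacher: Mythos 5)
Your proposal is correct and is exactly the argument the paper intends: the corollary is obtained by bounding the two factors of $g_j(k)=h_j(k)\binom{j}{k}$ separately, using Lemma~\ref{l:comb} for the binomial coefficient and Lemmas~\ref{l6}, \ref{l7}, \ref{l:max} for $h_j$, and multiplying (note that, as your derivation makes clear, the first displayed bound should carry the exponent $j$, i.e.\ $\bigl((2/a-1)^a\bigr)^{j}\binom{j}{\lfloor j/2\rfloor}$, matching the $1.7454^{j}$ on the right). Your closing remark about the two maxima being attained at different points, so the bound is not tight, is accurate but does not affect the validity of the statement.
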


To prove the next result we consider the gamma function $\Gamma(z)$, we will use
the Legendre duplication formula
$$\Gamma(z) \Gamma(z+ \frac{1}{2}) = 2^{1-2z} \sqrt{\pi} \Gamma(2z)$$
as well as Gautschi's
inequality
$$x^{1-s} < \frac{\Gamma(x+1)}{\Gamma(x+s)} < (x+1)^{1-s}$$
which holds for any positive real $x$ and $s \in (0,1).$
\begin{lemma}\label{lem:estimate}
For any even $j \ge 2$ we have $$\binom{j}{\lfloor \frac{j}{2} \rfloor} \le 
\sqrt{\frac{2}{j} }\cdot \frac{2^{j}}{\sqrt{\pi}}$$
while for odd $j > 2$ we have
$$\binom{j}{\lfloor \frac{j}{2} \rfloor} \le
\sqrt{\frac{2}{j+1}}\cdot \frac{2^{j}}{\sqrt{\pi}}.$$\end{lemma}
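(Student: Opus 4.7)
The plan is to handle the even and odd cases separately, in each case using the duplication formula to convert the central binomial coefficient into a ratio of two $\Gamma$ values, and then applying Gautschi's inequality to that ratio.

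For even $j = 2m$, I would apply the Legendre duplication formula with $z = m + \tfrac{1}{2}$, which gives
\[
\Gamma(m+\tfrac{1}{2})\,\Gamma(m+1) = 2^{-2m}\sqrt{\pi}\,\Gamma(2m+1),
\]
so that $(2m)! = 2^{2m}\Gamma(m+\tfrac{1}{2})\,m!/\sqrt{\pi}$. Dividing by $(m!)^2$ gives
\[
\binom{2m}{m} = \frac{2^{2m}}{\sqrt{\pi}}\cdot\frac{\Gamma(m+\tfrac{1}{2})}{\Gamma(m+1)}.
\]
Then I would apply Gautschi's inequality with $x = m$ and $s = \tfrac{1}{2}$, which yields $\Gamma(m+1)/\Gamma(m+\tfrac{1}{2}) > m^{1/2}$, hence $\Gamma(m+\tfrac{1}{2})/\Gamma(m+1) < 1/\sqrt{m}$. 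Combining gives $\binom{2m}{m} < 2^{2m}/\sqrt{\pi m} = \sqrt{2/j}\cdot 2^j/\sqrt{\pi}$, which is exactly the even estimate.

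For odd $j = 2m+1$, I would instead apply the duplication formula with $z = m+1$, obtaining $(2m+1)! = 2^{2m+1}\,m!\,\Gamma(m+\tfrac{3}{2})/\sqrt{\pi}$, and hence
\[
\binom{2m+1}{m} = \frac{(2m+1)!}{m!\,(m+1)!} = \frac{2^{2m+1}}{\sqrt{\pi}}\cdot\frac{\Gamma(m+\tfrac{3}{2})}{\Gamma(m+2)}.
\]
Applying Gautschi's inequality this time with $x = m+1$, $s = \tfrac{1}{2}$ gives $\Gamma(m+2)/\Gamma(m+\tfrac{3}{2}) > (m+1)^{1/2}$, so $\Gamma(m+\tfrac{3}{2})/\Gamma(m+2) < 1/\sqrt{m+1}$. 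Substituting yields $\binom{2m+1}{m} < 2^{2m+1}/\sqrt{\pi(m+1)} = \sqrt{2/(j+1)}\cdot 2^j/\sqrt{\pi}$, which is the odd estimate.

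There is no real obstacle here: once the correct shift is chosen in the duplication formula ($z = m+\tfrac{1}{2}$ for the even case and $z = m+1$ for the odd case), Gautschi's inequality immediately produces the desired bound with the exactly right denominator $\sqrt{j}$ or $\sqrt{j+1}$. The only point requiring a little care is matching the parameters so that Gautschi's inequality applies to the ratio produced by the duplication formula; the choice above makes the exponent $1-s = \tfrac{1}{2}$ appear in the denominator, which is precisely what gives the stated $\sqrt{2/j}$ and $\sqrt{2/(j+1)}$ factors.
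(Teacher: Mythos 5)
Your proposal is correct and follows essentially the same route as the paper: the Legendre duplication formula with $z=m+\tfrac12$ (even case) resp.\ $z=m+1$ (odd case) to rewrite the central binomial coefficient as $\frac{2^j}{\sqrt{\pi}}$ times a ratio of Gamma values, followed by Gautschi's inequality with $s=\tfrac12$ at $x=m$ resp.\ $x=m+1$. All parameter choices and the resulting bounds match the paper's argument.
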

\begin{proof}
If $j=2n$ is even then
$\displaystyle \binom{j}{\lfloor \frac{j}{2} \rfloor} =
\binom{2n}{n} =
\frac{\Gamma(2n+1)}{\Gamma(n + 1)^2}$.
Using the duplication formula with $z = n + \frac{1}{2}$ yields
$$\displaystyle 
\frac{\Gamma(2n+1)}{\Gamma(n + 1)^2} = \frac{\Gamma(2z)}{\Gamma(z + \frac{1}{2})^2}
= \frac{\Gamma(z)}{\Gamma(z+ \frac{1}{2})}\cdot \frac{2^{2z-1}}{\sqrt{\pi}} = \frac{\Gamma(\frac{j+1}{2})}{\Gamma(\frac{j}{2}+ 1)} \cdot \frac{2^{j}}{\sqrt{\pi}}.$$

Next we apply Gautschi's inequality with $s = \frac{1}{2}$ and $x = \frac{j}{2}$; it yields 
$$\frac{\Gamma(\frac{j+1}{2})}{\Gamma(\frac{j}{2}+ 1)}\cdot \frac{2^{j}}{\sqrt{\pi}} < \sqrt{ \frac{2}{j} }\cdot \frac{2^{j}}{\sqrt{\pi}}.$$

Now suppose that $j=2n+1$ is odd, then
$$\displaystyle \binom{j}{\lfloor \frac{j}{2} \rfloor} =
\binom{2n+1}{n} =
\frac{\Gamma(2n+2)}{\Gamma(n+1)\Gamma(n + 2)}.$$
Using the duplication formula with $z = n + 1$ yields
$$\displaystyle 
\frac{\Gamma(2n+2)}{\Gamma(n + 1)\Gamma(n+2)} = \frac{\Gamma(2z)}{\Gamma(z)\Gamma(z+1)}
= \frac{\Gamma(z+ \frac{1}{2})}{\Gamma(z+ 1)}\cdot \frac{2^{2z-1}}{\sqrt{\pi}} = \frac{\Gamma(\frac{j}{2}+1)}{\Gamma(\frac{j+1}{2}+1)} \cdot \frac{2^{j}}{\sqrt{\pi}}.$$

Again we apply Gautschi's inequality, here with $x = \frac{j+1}{2}$ and $s = \frac{1}{2}$ which yields
$$\frac{\Gamma(\frac{j}{2}+1)}{\Gamma(\frac{j+1}{2}+1)} \cdot \frac{2^{j}}{\sqrt{\pi}}< \sqrt{\frac{2}{j+1}}\cdot \frac{2^{j}}{\sqrt{\pi}}$$
\end{proof}

Now we are ready to prove the main theorem of the paper.

\begin{proof}[Proof of Theorem~\ref{t:1}]
From Corollary~\ref{cor:estimate} and Lemma~\ref{lem:estimate} it follows that

$$Q(j,k)\le \left( \left  ( \frac{2}{a} - 1 \right )^a\right )^j \binom{j}{\lfloor \frac{j}{2} \rfloor} \le \left( \left  ( \frac{2}{a} - 1 \right )^a\right )^j \frac{2^j}{\sqrt{j\pi/2}}= \frac{\left( 2\left  ( \frac{2}{a} - 1 \right )^a\right )^{j}}{\sqrt{j\pi/2}}.$$

Therefore,
   $$Q(j) \le j \max_k(Q(j,k)) \le   
   \frac{\left(2 \left  ( \frac{2}{a} - 1 \right )^a\right )^{j} \sqrt{j}}{\sqrt{\pi/2}} 
  .$$
  
  Thus we obtain
  $$N(n) \le 36 \sum_{j=1}^{n-1} Q(j) \le 36 \sum_{j=1}^{n-1}
    \frac{\left(2 \left  ( \frac{2}{a} - 1 \right )^a\right )^{j} \sqrt{j}}{\sqrt{\pi/2}} 
    \le  {\left(2 \left  ( \frac{2}{a} - 1 \right )^a\right )^{n}} C \sqrt{n-1},
  $$
  where $C$ is a positive constant. Recall that $p(n)$ denotes the complexity of $\tilde{\Sigma}$. Therefore using Proposition \ref{t2}, 
  $$p(n)\le 30\sum_{j=0}^{n-1} {\left(2 \left  ( \frac{2}{a} - 1 \right )^a\right )^{j}} C \sqrt{j-1}\le {\left(2 \left  ( \frac{2}{a} - 1 \right )^a\right )^{n}} C' \sqrt{n-1}$$ where $C'$ is another positive constant.
 From the definition of topological entropy we obtain $$h_{top}(\Fl)\le \log \left ( 2 \left  ( \frac{2}{a} - 1 \right )^a \right ) \le \log(3.49066).$$
\end{proof}

\section{Other possible definitions of topological entropy}\label{sec5}
Another very natural definition of 
 topological entropy was 
 given by Pesin and Pitskel' in \cite{PP} and the closely related
 capacity topological entropy was defined by Pesin in \cite{P}[Page 75].
 Applying these definitions to the map
 $F_l$ restricted to $\tilde{M}_l$
 yields two quantities, the Pesin-Pitskel' topological entropy $h_{\tilde{\Ml}}(\Fl)$ and the capacity topological entropy $Ch_{\tilde \Ml}(\Fl)$.
 Formally, in \cite{P} the capacity topological entropy is defined   in a slightly more restrictive setting than in \cite{PP}, but it can be defined in the setting of \cite{PP} and the relationship $h_{\tilde \Ml}(\Fl) \le Ch_{\tilde \Ml}(\Fl)$ from \cite{P} still holds.
 But our definition of $h_{top}(\Fl)$ coincides with Pesin's definition of $Ch_{\tilde \Ml}(\Fl)$. Thus we have the following corollary
 
 \begin{cor} For any $l > 0$ 
 the Pesin-Pitskel' topological entropy $h_{\tilde \Ml}(\Fl)$
 is bounded from above by 
$\log \left ( 2 \left (\frac{2}{a} - 1 \right )^a \right ) < \log (3.49066)$.
\end{cor}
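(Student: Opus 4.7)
The plan is to derive the corollary as an immediate consequence of Theorem~\ref{t:1} by establishing the two displayed inequalities in the paragraph preceding the corollary, namely $h_{\tilde \Ml}(\Fl) \le Ch_{\tilde \Ml}(\Fl)$ and $Ch_{\tilde \Ml}(\Fl) = h_{top}(\Fl)$. The first inequality is a general fact: for any continuous map of a (not necessarily compact) metric space, Pesin's capacity topological entropy dominates the Pesin–Pitskel' topological entropy. In \cite{P} this is stated in a slightly narrower setting, so the first thing I would do is carry the definition of capacity entropy from \cite{P} verbatim into the framework of \cite{PP}, and then check that the standard inf-over-covers vs. limit-of-spanning-sets comparison carries through unchanged; the argument is the usual one, since capacity entropy replaces the infimum over all covers of an invariant set by the limit over a single sequence of $(n,\varepsilon)$-spanning sets.

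Next I would identify $Ch_{\tilde \Ml}(\Fl)$ with the shift entropy on $\overline{\Sigma}$ introduced in Section~\ref{sec2}. The key observation is that the cover of $\tilde \Ml$ by the six interiors of the elements of $\A$ is generating: for any $\varepsilon > 0$, the $k$-cells $\omega(v)$ constructed in Section~\ref{s:3} have diameter tending to $0$ as $k \to \infty$ (this uses the hyperbolic expansion of unstable curves discussed in the proof of Proposition~\ref{p3}, together with compactness arguments from \cite{BC}). Therefore, for $n$ and $\varepsilon$ suitably related, an $(n,\varepsilon)$-spanning set for $F_l|_{\tilde \Ml}$ can be chosen with one point in each nonempty $n$-cell, and conversely each nonempty $n$-cell contributes a distinct coded orbit. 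Hence the exponential growth rate of spanning sets equals the growth rate of $p(n) = \#\Ln$, which is precisely $h_{top}(\Fl)$ as defined in Section~\ref{sec2}.

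Combining these two steps gives $h_{\tilde \Ml}(\Fl) \le Ch_{\tilde \Ml}(\Fl) = h_{top}(\Fl)$, and then Theorem~\ref{t:1} yields the stated upper bound
\[
h_{\tilde \Ml}(\Fl) \le h_{top}(\Fl) < \log\!\left(2\left(\tfrac{2}{a}-1\right)^{a}\right) < \log(3.4908).
\]
The main obstacle is the identification $Ch_{\tilde \Ml}(\Fl) = h_{top}(\Fl)$: one must carefully check that points in $\overline{\Sigma}\setminus \tilde{\Sigma}$ — which correspond to one-sided limits at the singular set — do not contribute extra exponential growth, and conversely that the $n$-cells really form a partition whose diameter shrinks uniformly, so that $(n,\varepsilon)$-separated sets in $\tilde{\Ml}$ are in bijection (up to a bounded multiplicative factor) with length-$n$ words in $\Ln$. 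Everything else is a bookkeeping translation between the dynamical and symbolic sides of the picture.
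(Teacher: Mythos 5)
Your proposal follows exactly the paper's route: the corollary is deduced from Theorem~\ref{t:1} via the chain $h_{\tilde \Ml}(\Fl) \le Ch_{\tilde \Ml}(\Fl) = h_{top}(\Fl)$, where the first inequality is Pesin's general comparison carried over to the Pesin--Pitskel' setting and the second is the identification of capacity entropy with the symbolic entropy on $\overline{\Sigma}$. You simply spell out in more detail (shrinking $n$-cells, spanning sets matched to cells) the identification that the paper asserts in one sentence, so the argument is correct and essentially identical to the paper's.
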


\section*{Appendix}
 
In \cite{BC} the authors state in Theorem 3.5 that  a certain 16 element partition is generating.  Just after the statement of the theorem they remark that it  implies that the six element partition we consider in this article is a generating partition as well.

For convenience we give a formal proof of this remark.
Consider the four element partition $\mathcal{B} := \{L,T,B,R\}$. The
sixteen element partition $\mathcal{C}$ consists of the connected components of the partition
$\mathcal{C}' := \mathcal{B} \vee \Fl^{-1}(\mathcal{B})$. The partition $\mathcal{C}'$ has 14 elements since the codes $TT$ and $BB$ can not be realized.
There are exactly two  elements of $\mathcal{C}'$ which are not connected, corresponding to  
 the pairs $LL$ and $RR$. Each of them has two connected components, 
 we call the splitting into the two connected components $\overline{LL},\underline{LL},$ resp.~$\overline{RR},\underline{RR}$ which yields the partition $\mathcal{C}$.  The component $\overline{LL}$ ($\overline{RR}$) consists of $x \in \Ml$ such that $x$ and $\Fl(x)$ are on the left (right) semi-circle of $\partial B_l$ and
$x = (s,\theta)$ with $\theta \le 0$. Similarly, the component $\underline{LL}$   ($\underline{RR}$) consist of such points where $\theta \ge 0$.

We consider the space of all codes $\tilde{\Sigma}_6$ resp.\ $\tilde{\Sigma}_{16}$ of orbits which stay in the interiors of the partition elements with the six letter, resp.\ 16 letter alphabet, and their 
closures
$\overline{\Sigma}_6$ resp.\ $\overline{\Sigma}_{16}$. 
Note that $\tilde{\Sigma}_6$ resp.\ $\overline{\Sigma}_6$ is referred to $\tilde{\Sigma}$ resp.\ $\overline{\Sigma}$ in the rest of this article.
Theorem 3.5 of \cite{BC} states that the partition $\mathcal{C}$ is a generating partition in the 
following sense:
there is a continuous surjection  
$$\pi : \tilde{\Sigma}_{16} \setminus ((TB)^\infty \cup (BT)^{\infty}) \to \tilde \Ml \setminus \hat \Ml$$
such that 
$$\Fl \circ \pi = \pi \circ \sigma$$ where $\sigma$ is the shift map
and 
$\hat \Ml := \{x \in \Ml: c(s,\theta) = (BT)^\infty \text{ or } (TB)^\infty\}.$

Actually $\mathcal{C}$ is a generating partition in a stronger sense; namely
there is a continuous surjection  

$$\pi : \overline{\Sigma}_{16} \setminus ((TB)^\infty \cup (BT)^{\infty}) \to  \Ml \setminus \hat \Ml$$
such that $\Fl \circ \pi = \pi \circ \sigma$.
The proof of
this stronger statement is identical to the proof of Theorem 3.5 of \cite{BC}, and it is important to note that in Equation (10) of their proof the authors show that the
intersection of closed cells is a single point.

We define $\phi: \overline{\Sigma}_6 \to \overline{\Sigma}_{16}$
by regrouping consecutive symbols and erasing the overbar or underbar except for the four cases
$\overline{LL},\underline{LL},\overline{RR},\underline{RR}$.

For example in Figure \ref{f1} we consider the lower point on $L$ pointing up, we have 
$$\phi(\dots,\ubar{L},\ubar{L},\ubar{R},\ubar{R}, \dots) = \dots,
\underline{LL}, LR, \underline{RR}, RL, \dots$$ 
while if we consider the upper point on $L$ pointing down, we have
$$\phi(\dots,\bar{L},\bar{L},\bar{R},\bar{R}, \dots) = \dots,
\overline{LL}, LR, \overline{RR}, RL, \dots.$$ 

The map $\phi$ is continuous, surjective, and commutes with the shift map, thus 
$$\pi \circ \phi : \overline{\Sigma}_{6} \setminus ((TB)^\infty \cup (BT)^{\infty}) \to  \Ml \setminus \hat \Ml
$$
is a continuous surjection and 
such that $\Fl \circ \pi \circ \phi =  \pi \circ \phi \circ \sigma$,
i.e., the six element partition is generating as well.

\section*{Acknowledgements and statements}
We thank Peter B\'alint, Lyonia Bunimovich, Micha{\l } Misiurewicz and Yasha Pesin for 
useful discussions and helpful remarks. We also thank the referees for detailed 
constructive comments.
J. \v Cin\v c was partially supported by the FWF Schr\"odinger Fellowship stand-alone project J 4276-N35, the IDUB program no. 1484 ``Excellence initiative - research university'' for the AGH University of Science and Technology and the AD Program J1-4632 from ARRS, Slovenian National Research Agency.

Data sharing is not applicable to this article as no datasets were generated or analysed during the current study.
Both authors declare that they have no conflicts of interest.

\end{document}